\newtheorem{thm}{Theorem}[section]
\newtheorem{prop}[thm]{Proposition}
\newtheorem{cor}[thm]{Corollary}
\theoremstyle{definition}
\theoremstyle{plain}
\author{M. G. Mahmoudi}
\date{}
\title
[Cayley parametrization and the rotation group]
{Cayley parametrization and the rotation group over a non-archimedean pythagorean field}
\begin{document}
\maketitle

\begin{abstract}
  Using Cayley transform, we show how to construct rotation matrices \emph{infinitely
  near} the identity matrix over a non-archimedean pythagorean field.
As an application, an alternative way to construct non-central proper
normal subgroups of the rotation group over such fields is provided.\\

\noindent
\emph{Mathematics Subject Classification:} 11E57, 12D15, 20G15,
65F35.\\ \\
\emph{Keywords:} 
Cayley parametrization; Frobenius norm; orthogonal group; rotation group; pythagorean field; non-archimedean field.
\end{abstract}

\section{Introduction}

The Cayley transform provides an important parametrization of the rotation group of the euclidean space.
This map which was introduced by Arthur Cayley in 1846, associates a
rotation matrix to every skew symmetric matrix of eigenvalues
different from $-1$.
The Cayley parameterization was used by J. Dieudonn\'e in \cite[\S15, \S16]{dieudonne} to construct normal subgroups of the rotation group of a vector space $V$ of dimension
$n=3$ equipped with an anisotropic quadratic form over the field of $p$-adic numbers or the field of formal Laurent series in one variable over the field of real numbers for arbitrary $n\geqslant3$.
Later, using the properties of \emph{Elliptic spaces}, E. Artin in
\cite[Ch.\,V, \S3]{artin} constructed non-central subgroups of the
rotation group $\mathrm{SO}(V)$ of a vector space $V$ of dimension
$n\geqslant3$ equipped with an anisotropic quadratic $q$ form over a field
$F$ with a non-archimedean ordering.

We simplify the Artin construction using the Cayley parametrization.
The main new observations that make this simplification possible are the
following:

(1) Assume that $n$ is odd or $n$ is even and $\det q$ is trivial.
If $F$ is not formally real pythagorean then the rotation group $\mathrm{SO}(V)$ is never projectively simple (see
(\ref{trivial-spinor-norm}) and (\ref{projectively-simple})).
This enables us to concentrate on the case where the base field $F$ is pythagorean, hence the Frobenius norm is at disposal and one may consider it instead of the maximum
norm as used in \cite[Ch.\,V, \S3]{artin}.

(2)  If $F$ is a formally real pythagorean
field with a non-archimedean ordering then one can find a rotation matrix $A\in M_n(F)$ such
that the Frobenius norm of $A-I\neq0$ is infinitely small (see
(\ref{rotation-near-identity})). 

Finally, as another application, we complement\footnote{There is a gap in our proof of Corollary \ref{broecker} in the case where $\dim V$ is even. 
More precisely, if $\dim V$ is even, the truth of the statement ``if $\mathrm{SO}(V)$ is projectively simple then $F$ is pythagorean formally real field and $q$ is similar to the quadratic form $x_1^2+x_2^2+\cdots+x_n^2$'' remains unestablished.
There is a follow-up of this preprint in the Bulletin of the Iranian Mathematical Society,
February 2020, Volume 46, Issue 1, pp 253–262
} a result due to L. Br\"ocker
\cite{broecker}, which implies that the orthogonal groups $\mathrm{SO}(V)$ are all projectively simple for the case where the Witt index $\nu$ of $q$ is zero, $n\geqslant3$ and $n\neq4$ 
precisely in the case where $F$ is pythagorean formally real field, admits only of archimedean ordering and
$q$ is similar to $x_1^2+x_2^2+\cdots+x_n^2$.
This addresses the following passage by J. Dieudonn\'e in \cite[p. 39]{dieudonne}:
{\it
``
... la simplicité du groupe des rotations (pour $\nu=0$, $n>2$ et $n\neq4$)
 sur le
corps des nombres réels $\mathbb{R}$, apparaît comme un phénomène très
particulier, dû à la structure très spéciale du corps $\mathbb{R}$ parmi les corps
commutatifs. Il y aurait lieu de rechercher s'il existe d'autres corps
qui partagent avec lui cette propriété.''}.
\section{Notation and Terminology}

Throughout this paper, $F$ denotes a field of characteristic different from two.
Let $(V,b)$ be bilinear space of dimension $n$ over a field $F$ and let $q:V\rightarrow F$ be its associated nondegenerate quadratic form given by $q(x)=b(x,x)$.
The determinant of $b$ is the determinant of a Gram matrix of
$b$ modulo $F^{{\times}2}$ and is denoted by $\det b$ or $\det q$.
One says that $b$ or $q$ represents a scalar $\lambda\in F$ if there
exists a nonzero vector $v\in V$ such that $b(v,v)=\lambda$.
A vector $u\in V$ is called {\it anisotropic} (resp. {\it isotropic}) if $q(u)\neq0$ (resp. $q(u)=0$).
A quadratic form $q$ is said to be \emph{anisotropic} if all nonzero
vectors in $V$ are anisotropic, otherwise $q$ is called \emph{isotropic}.
The {\it Witt index} $\nu$ of $(V,b)$ is defined as the maximal dimension of a totally isotropic subspace of $V$.
As the characteristic of $F$ is different from $2$, there exists an
orthogonal basis $\{e_1,\cdots,e_n\}$ of $(V,b)$.
It is known that for $n\geqslant3$ the group $\mathrm{SO}(V)$ is a
non-abelian group and its center is either $\{\pm\mathrm{id}\}$ or
$\{\mathrm{id}\}$ depending on the parity of $n$ (see \cite[p. 51]{dieudonne71}).

For every $\sigma\in \mathrm{O}(V)$ by the Cartan-Dieudonn\'e theorem, there
exists a decomposition $\sigma=\prod_{i=1}^m\tau_{u_i}$ where
$m\leqslant n$ and $\tau_{u_i}$ is a reflection along
the anisotropic vectors $u_i\in V$ given by
$$\tau_{u_i}(x)=x-2\frac{b(x,u_i)}{q(u_i)}u_i.$$
It is known that the class of $\theta(\sigma)=\prod_{i=1}^nq(u_i)$ in
the quotient group $F^{\times}/F^{{\times}2}$, which is called the {\it spinor norm} of
$\sigma$, is independent of the choices and the number of $u_i$'s.

We recall that a field $F$ is said to be {\it formally real} (or {\it ordered field}) if $-1$ is not
a sum of squares in $F$.
A field $F$ is called \emph{pythagorean} if every sums of squares is
again a square.
An element $c$ of a formally real field $F$ is called \emph{totally positive} if $c$ is positive with respect to all orderings of $F$.

If $F$ is a pythagorean field then for every matrix $A$ with entries
in $F$, the \emph{Frobenius norm} of $A$, denoted by $\|A\|$, is defined as the absolute
value of the square root of the sum of squares of entries of $A$.
The Frobenius norm satisfies the inequalities
$\|AB\|\leqslant\|A\|\|B\|$ and $\|A+B\|\leqslant\|A\|+\|B\|$ when $A$ and $B$ are of appropriate sizes.

For any matrix $A\in M_n(F)$ such that $I+A$ is invertible, the map
$A\mapsto(I-A)(I+A)^{-1}$ is called the \emph{Cayley map}.
For any skew-symmetric matrix $A\in M_n(F)$ such that the
matrix $I+A$ is invertible, the matrix $Q=C(A)$ satisfies $Q^TQ=I$
and $\det Q=1$. 
Conversely for every orthogonal matrix $Q$ such that $I+Q$ is
invertible $C(Q)$ is a skew-symmetric matrix, see \cite[p. 56]{weyl}.

\section
{Rotation group over non-archimedean fields}

As a first observation we have the following result:

\begin{prop}
\label{trivial-spinor-norm}
Let $(V,b)$ be anisotropic bilinear space of dimension $n\geqslant2$
over a field $F$.
Then the image of the spinor norm $\theta:\mathrm{SO}(V)\rightarrow F^\times/F^{\times 2}$ is trivial if and only if $F$ is a formally real pythagorean field and $q$ represents only one square class in $F^\times$, in particular $q$ is similar to the quadratic form $x_1^2+\cdots+x_n^2$.
\end{prop}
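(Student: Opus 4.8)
The plan is to compute the image of $\theta$ on $\mathrm{SO}(V)$ straight from the Cartan--Dieudonn\'e presentation and then translate its triviality into a condition on the values of $q$. First, since every $\sigma\in\mathrm{SO}(V)$ is a product of an \emph{even} number of reflections $\tau_{u_1}\cdots\tau_{u_{2k}}$, one has $\theta(\sigma)=\prod_{i}q(u_i)$; conversely $\tau_u\tau_v\in\mathrm{SO}(V)$ with $\theta(\tau_u\tau_v)=q(u)q(v)$ for any anisotropic $u,v$. As $q$ is anisotropic, every nonzero vector is anisotropic, so, grouping the $2k$ factors $q(u_i)$ of $\theta(\sigma)$ into $k$ pairs, the image $\theta(\mathrm{SO}(V))$ is exactly the subgroup of $F^\times/F^{\times2}$ generated by the classes $q(u)q(v)$ with $u,v\in V\setminus\{0\}$. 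Hence $\theta$ is trivial on $\mathrm{SO}(V)$ if and only if $q(u)\equiv q(v)\pmod{F^{\times2}}$ for all nonzero $u,v$, that is, if and only if $q$ represents a single square class $a_0F^{\times2}$. Diagonalizing, this forces $q\cong\langle a_0,\dots,a_0\rangle$, so $q$ is similar to $x_1^2+\cdots+x_n^2$, which takes care of the ``in particular'' clause; it also yields the implication ($\Leftarrow$), since when $D(q)=a_0F^{\times2}$ each generator $q(u)q(v)$ is visibly a square.

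It then remains only to prove that an anisotropic form $q$ of dimension $n\geqslant2$ representing a single square class forces $F$ to be formally real pythagorean. After rescaling we may take $q\cong\langle1,\dots,1\rangle$ and work inside its binary subform $\langle1,1\rangle$. If $-1$ were a square then $\langle1,1\rangle$, hence $q$, would be isotropic, a contradiction; so $-1\notin F^{\times2}$. Moreover $\langle1,1\rangle$ represents $a^2+b^2$, which by anisotropy is nonzero for $(a,b)\neq(0,0)$ and therefore lies in $D(q)=F^{\times2}$; thus every sum of two squares is a square, whence (by the induction $a_1^2+\cdots+a_k^2=c^2+a_k^2$, writing $a_1^2+\cdots+a_{k-1}^2=c^2$) every sum of squares in $F$ is a square, i.e.\ $F$ is pythagorean. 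Since the sums of squares are then precisely the squares and $-1$ is not among them, $F$ is formally real.

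I expect the one genuinely substantive point to be this last passage from ``$q$ represents a single square class'' to ``$F$ is formally real pythagorean''; everything else is an unwinding of the spinor-norm formula. The minor matters to keep an eye on are that $n\geqslant2$ is used only to extract the subform $\langle1,1\rangle$, and that anisotropy of $q$ is exactly what guarantees that the relevant values of $q$ are nonzero, so their square classes are defined. (It is worth noting in passing that, under the standing hypotheses, the clause ``$F$ formally real pythagorean'' in the statement is in fact a consequence of ``$q$ represents a single square class''.)
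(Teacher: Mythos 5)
Your proposal is correct and follows essentially the same route as the paper: both reduce triviality of $\theta$ to the condition that all values $q(u)$, $u\neq0$, lie in one square class via products of two reflections, then extract pythagorean-ness from the binary subform (the paper uses $q(\alpha u+\beta v)=(\alpha^2+\beta^2)d$, you use $\langle1,1\rangle$ after rescaling), and derive formal reality from anisotropy in the same way. Your explicit remark that ``formally real pythagorean'' is already implied by ``$q$ represents one square class'' is a small but accurate sharpening of the statement.
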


\begin{proof}
 First assume that the spinor norm map is the trivial map.
 Let $u\in V$ be an anisotropic vector with $q(u)=d\neq0$.
Then for any anisotropic vector $v\in V$ as
$\theta(\tau_u\tau_v)$ is trivial, the scalars $q(u)$ and
$q(v)$ are in the same square class in $F^{{\times}}$.
It follows that $q$ is isomorphic to 
$dx_1^2+\cdots+dx_n^2$.
Let $\alpha, \beta\in F$, we should prove that there exists $\gamma\in
F$ such that $\gamma^2=\alpha^2+\beta^2$.
As $n\geqslant2$, we may consider two orthogonal vectors $u, v\in V$ with
$q(u)=q(v)=d\neq0$.
Now consider the vector $w=\alpha u+\beta v\in V$.
We have $q(w)=(\alpha^2+\beta^2)d$.
As $q(w)$ and $q(u)$ are in the same square classes, the quantity
$\alpha^2+\beta^2$ is a square in $F$.
If $F$ is not formally real, then $-1$ is a sum of squares in $F$,
hence is a square since $F$ is pythagorean. 
As $q\simeq dx_1^2+\cdots+dx_n^2$ and $n\geqslant2$, the form $q$
would be isotropic, contradiction. 
The converse follows from the Cartan-Dieudonn\'e theorem and the fact that the current hypotheses imply that the spinor norm of the product of two arbitrary reflections is trivial.
\end{proof}

\begin{prop}
\label{projectively-simple}
Let $(V,b)$ be an anisotropic bilinear space of dimension $n\geqslant3$ over a field $F$ such that $\mathrm{SO}(V)$ is projectively simple.
Then the spinor norm map $\theta:\mathrm{SO}(V)\rightarrow F^\times/F^{\times2}$ is the trivial map precisely in the following cases $(i)$ $n$ is odd,
$(ii)$ $n$ is even and the determinant of $b$ is trivial.
\end{prop}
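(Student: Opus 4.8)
The plan is to leverage Proposition~\ref{trivial-spinor-norm}, which already characterizes triviality of the spinor norm for anisotropic forms: it is trivial precisely when $F$ is formally real pythagorean and $q$ represents only one square class, i.e.\ $q\simeq d\langle 1,\dots,1\rangle$ for some $d\in F^\times$ (so after rescaling $q\simeq x_1^2+\cdots+x_n^2$). So the real content is to show that, under the hypothesis that $\mathrm{SO}(V)$ is projectively simple, this ``one square class'' condition is \emph{equivalent} to the numerical condition ($n$ odd, or $n$ even with $\det b$ trivial). One direction is immediate and uses no simplicity: if $q\simeq x_1^2+\cdots+x_n^2$ then $\det b\equiv 1$ in $F^\times/F^{\times2}$ when $n$ is even, and $n$ is allowed to be anything when $n$ is odd — so triviality of the spinor norm forces case $(i)$ or $(ii)$. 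The work is the converse: assuming $(i)$ or $(ii)$ together with projective simplicity, deduce that $q$ represents only one square class.

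For the converse I would argue by contraposition: suppose $q$ represents at least two distinct square classes, say $q$ represents $1$ and $a$ with $a\notin F^{\times2}$. Then the spinor norm map has the class of $a$ in its image, so it is a nontrivial group homomorphism $\theta:\mathrm{SO}(V)\to F^\times/F^{\times2}$. Its kernel $\Omega(V)$ (the kernel of the spinor norm) is a normal subgroup of $\mathrm{SO}(V)$, and because $\theta$ is nontrivial, $\Omega(V)$ is a \emph{proper} subgroup. To contradict projective simplicity I must show $\Omega(V)$ is not contained in the center $Z$ of $\mathrm{SO}(V)$, i.e.\ that $\Omega(V)$ is non-central. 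This is where the parity hypotheses enter. When $n$ is odd the center is trivial, so it suffices to know $\Omega(V)\neq\{1\}$; when $n$ is even with $\det b$ trivial the center is $\{\pm\mathrm{id}\}$ and I must exhibit a non-central element of $\Omega(V)$ — for instance a product $\tau_u\tau_v$ with $q(u)=q(v)$ (so spinor norm trivial) but which is not $\pm\mathrm{id}$; for $n\geqslant3$ such rotations in a $2$-plane clearly exist and are non-central. In the odd case, nontriviality of $\Omega(V)$ follows similarly from the existence of $\tau_u\tau_v$ with $q(u)=q(v)$ that is not the identity. Thus in either case $\Omega(V)$ is a non-central proper normal subgroup, contradicting projective simplicity. (One should note why $(ii)$ requires $\det b$ trivial: if $n$ is even and $\det b$ is nontrivial, the product of all $n$ basis reflections is $-\mathrm{id}$ with spinor norm equal to $\det b\neq 1$, so $-\mathrm{id}\notin\Omega(V)$, meaning $\Omega(V)\cap Z=\{1\}$ — the argument still goes through as long as $\Omega(V)\neq\{1\}$, but in the full ``precisely'' statement we want the clean equivalence, and indeed when $\det b$ is nontrivial the spinor norm is automatically nontrivial, so that case is correctly excluded from the list of cases where $\theta$ is trivial.)

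Assembling: the forward implication ``$\theta$ trivial $\Rightarrow$ $(i)$ or $(ii)$'' is elementary linear algebra on determinants via Proposition~\ref{trivial-spinor-norm}; the reverse implication ``$(i)$ or $(ii)$ $\Rightarrow$ $\theta$ trivial'' is the contrapositive argument above, producing from any nontrivial spinor norm a non-central proper normal subgroup $\Omega(V)\trianglelefteq\mathrm{SO}(V)$, which contradicts projective simplicity. The main obstacle is the second implication, and within it the genuinely delicate point is verifying that $\Omega(V)$ is \emph{non-central} in the even case: one must carefully pick the two reflections $\tau_u,\tau_v$ (with equal values of $q$) so that their product is a rotation that moves some vector not fixed by $-\mathrm{id}$ either — routine for $n\geqslant3$ since a rotation supported on a $2$-dimensional anisotropic subspace, nontrivial and distinct from the $180^\circ$ rotation, is never $\pm\mathrm{id}$, and such configurations exist because $\mathrm{SO}$ of a $2$-dimensional anisotropic space over a formally real pythagorean field is infinite. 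I would also remark that projective simplicity is used only through this single consequence (``no non-central proper normal subgroups''), so the proof is short once the kernel-of-spinor-norm subgroup is identified as the witness.
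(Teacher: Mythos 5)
Your argument is correct in substance but takes a genuinely different route from the paper in the converse direction. The paper's proof is purely group-theoretic: in case $(i)$ the center is trivial, so $\ker\theta$ must be $\{1\}$ or all of $\mathrm{SO}(V)$, and $\{1\}$ is impossible because $\mathrm{SO}(V)$ is non-abelian while $F^\times/F^{\times2}$ is abelian; in case $(ii)$ it first checks $-\mathrm{id}\in\ker\theta$ (since $\theta(-\mathrm{id})=\det b$ is trivial), so $\ker\theta$ is $\{\pm\mathrm{id}\}$ or all of $\mathrm{SO}(V)$, and rules out the first option because $\mathrm{SO}(V)/\{\pm\mathrm{id}\}\cong\mathrm{im}(\theta)$ would then be an abelian simple group, hence cyclic, forcing $\mathrm{SO}(V)$ to be abelian. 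You instead kill the small-kernel options by exhibiting an explicit non-central element of $\ker\theta$, namely $\tau_u\tau_v$ with $u,v$ non-proportional and $q(u)=q(v)$; this is more concrete and avoids the ``abelian simple group'' step, at the cost of having to prove such a pair exists. That existence is true, but your justification (infinitude of $\mathrm{SO}$ of a plane ``over a formally real pythagorean field'') is not available in the contrapositive, where $\theta$ is assumed nontrivial and Proposition \ref{trivial-spinor-norm} gives you nothing about $F$; the correct argument is that for anisotropic $u$ and anisotropic $w\in u^\perp$ the conic $s^2+(q(w)/q(u))\,t^2=1$ has points with $t\neq0$ over $F$ (which is infinite, since finite fields carry no anisotropic form of dimension $\geqslant3$), giving $v=su+tw$ non-proportional to $u$ with $q(v)=q(u)$. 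Your forward direction, via Proposition \ref{trivial-spinor-norm} and $\det\bigl(d\langle1,\dots,1\rangle\bigr)=d^n$, is fine, though the paper's one-line computation $\theta(-\mathrm{id})=\det b$ is more direct.

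One point needs care. The principle ``a proper non-central normal subgroup contradicts projective simplicity'' is false if projectively simple means that $G/Z(G)$ is simple: $A_5\times C_2$ is projectively simple in that sense, yet $A_5\times\{1\}$ is a proper non-central normal subgroup. In case $(ii)$ your argument therefore needs the additional observation --- which you record in your parenthetical but never feed back into the proof --- that $\theta(-\mathrm{id})=\det b$ is trivial, so $\{\pm\mathrm{id}\}\subseteq\ker\theta$ and hence $\ker\theta/\{\pm\mathrm{id}\}$ genuinely is a nontrivial proper normal subgroup of the simple quotient. Your parenthetical claim that ``the argument still goes through as long as $\Omega(V)\neq\{1\}$'' even when $\Omega(V)\cap Z=\{1\}$ is exactly where this reasoning would break down. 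With that one line wired in (or under the stronger reading of projective simplicity as ``every normal subgroup is central or the whole group,'' which the paper appears to use elsewhere), the proof is complete.
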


\begin{proof}
First suppose that $\theta$ is the trivial map.
Let $\{e_1,\cdots,e_n\}$ be an orthogonal basis of $(V,b)$. 
We have $-\mathrm{id}=\tau_{e_1}\tau_{e_2}\cdots\tau_{e_n}$.
It follows that in the case where $n$ is even, $-\mathrm{id}\in\mathrm{SO}(V)$
 and
$\theta(-\mathrm{id})$ which coincides with the determinant of $b$, is trivial.

Conversely suppose that $(i)$ or $(ii)$ hold.
In the case $(i)$ the group $\mathrm{SO}(V)$ has trivial center.
Thus the hypotheses actually say that $\mathrm{SO}(V)$ is itself a simple group.
It follows that $\mathrm{ker}(\theta)$ is either the trivial group or is
the whole $\mathrm{SO}(V)$.
The first case is impossible as $\mathrm{SO}(V)$ is non-abelian.
The second case implies that $\mathrm{im}(\theta)$ is trivial.
In the case $(ii)$ the center of $\mathrm{SO}(V)$ is the subgroup $\{\pm\mathrm{id}\}$.
As $-\mathrm{id}=\tau_{e_1}\tau_{e_2}\cdots\tau_{e_n}$ and the
determinant of $b$ is assumed to be trivial, $\theta(-\mathrm{id})$ is
trivial as well.
Therefore, the kernel of the spinor norm map $\theta$ is a subgroup of $\mathrm{SO}(V)$
containing $\{\pm\mathrm{id}\}$.
As $\mathrm{SO}(V)$ is assumed to be projectively simple,
$\mathrm{ker}(\theta)$ is either $\{\pm\mathrm{id}\}$, or
$\mathrm{SO}(V)$.
If $\mathrm{ker}(\theta)=\{\pm\mathrm{id}\}$, we have
$\mathrm{SO}(V)/\{\pm\mathrm{id}\}\simeq\mathrm{im}(\theta)$.
This shows that $\mathrm{SO}(V)/\{\pm\mathrm{id}\}$ is an abelian simple group.
  It follows that this quotient group is either the trivial group
(thus $\mathrm{im}(\theta)$ is the trivial group) or a group of prime
order.
The later case is ruled out as a non-abelian group is never projectively cyclic.
The former case also implies the
triviality of $\mathrm{im}(\theta)$.
\end{proof}

\begin{prop}
\label{rotation-near-identity}
  Let $F$ be a pythagorean field with a non-archimedean ordering $\leqslant$.
Then there exists a matrix $A\in M_n(F)$ with $A\neq\pm I$, $A^TA=I$ and $\det A=1$
such that $\|I-A\|$ is infinitely small.
\end{prop}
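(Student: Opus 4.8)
The plan is to realize $A$ as the Cayley transform of a skew-symmetric matrix all of whose entries are infinitely small, and then to bound the Frobenius norm of $I-A$ by an explicit $2\times 2$ computation. Since the ordering $\leqslant$ of $F$ is non-archimedean, there is an element $t\in F$ with $0<t$ and $mt\leqslant 1$ for every positive integer $m$; in particular $0<t<1$, and then $t$, $t^{2}$ and $8t^{2}$ are all infinitely small, as is any element of $F$ whose absolute value is at most a fixed integer multiple of $t$ or of $t^{2}$.

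First I would take $B\in M_n(F)$ to be the skew-symmetric matrix whose upper-left $2\times 2$ block is $\left(\begin{smallmatrix}0&t\\-t&0\end{smallmatrix}\right)$ and all of whose remaining entries are zero (we assume $n\geqslant 2$, the case $n=1$ being vacuous since $\mathrm{SO}_1$ is trivial). A block computation gives $\det(I+B)=1+t^{2}\neq 0$, so $I+B$ is invertible, and the properties of the Cayley map recalled above then guarantee that $A:=C(B)=(I-B)(I+B)^{-1}$ satisfies $A^{T}A=I$ and $\det A=1$. Moreover $A\neq I$, because $C(B)=I$ forces $I-B=I+B$, i.e. $B=0$, contrary to our choice; and $A\neq -I$, because $C(B)=-I$ forces $I-B=-(I+B)$, i.e. $I=-I$, impossible as $\operatorname{char}F\neq 2$.

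The substantive step is to show that $\|I-A\|$ is infinitely small, and here one must be slightly careful because the Frobenius norm a priori requires extracting a square root. A direct computation on the nontrivial block gives
\[
I-A=\frac{1}{1+t^{2}}\begin{pmatrix}2t^{2}&2t\\-2t&2t^{2}\end{pmatrix}\oplus 0_{n-2},
\]
so the sum of the squares of the entries of $I-A$ equals $\dfrac{8t^{4}+8t^{2}}{(1+t^{2})^{2}}$. Its numerator is a sum of squares in $F$ and its denominator is a square, hence the whole quotient is a square in $F$ because $F$ is pythagorean; thus $\|I-A\|\in F$ is well defined. Finally $\dfrac{8t^{4}+8t^{2}}{(1+t^{2})^{2}}=\dfrac{8t^{2}}{1+t^{2}}\leqslant 8t^{2}$ is infinitely small, and a positive element of $F$ whose square is infinitely small is itself infinitely small (if $x>0$ and $x\geqslant 1/m$ then $x^{2}\geqslant 1/m^{2}$). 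Therefore $\|I-A\|$ is infinitely small, and this $A$ meets all the requirements.

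I expect the only genuine subtlety to be the bookkeeping around ``infinitely small'': one must invoke the failure of the archimedean property exactly to secure a single $t$ lying below every $1/m$, and then check that passing to $t^{2}$, to fixed integer multiples, and to a square root each preserves being infinitely small. Everything else — invertibility of $I+B$, the orthogonality and the determinant of $C(B)$, and the two exclusions $A\neq\pm I$ — is either immediate from the recalled properties of the Cayley map or a one-line matrix calculation.
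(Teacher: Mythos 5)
Your proof is correct, and it takes a genuinely different route from the paper's. The paper starts from an arbitrary nonzero skew-symmetric matrix $B$ with rational entries, forms $A=C(\epsilon B)$ for an infinitely small $\epsilon>0$, and controls $\|I-A\|$ indirectly: it approximates $(I+\epsilon B)^{-1}$ by the truncated geometric series $D=I-\epsilon B+\cdots+\epsilon^{m-1}B^{m-1}$, invokes Cramer's rule to see that the entries of $B^m(I+\epsilon B)^{-1}$ are rational functions of $\epsilon$ of bounded degree, and then assembles the estimate via the triangle and submultiplicativity inequalities. You instead choose the single rank-two block $\bigl(\begin{smallmatrix}0&t\\-t&0\end{smallmatrix}\bigr)$, for which the Cayley transform and hence $I-A$ can be written in closed form, reducing the whole estimate to the identity $\tfrac{8t^4+8t^2}{(1+t^2)^2}=\tfrac{8t^2}{1+t^2}\leqslant 8t^2$. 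What the explicit computation buys you is the elimination of the series/Cramer machinery and a completely transparent bound; you also attend to two points the paper glosses over, namely that the sum of squares of the entries of $I-A$ is actually a square in the pythagorean field $F$ (so the Frobenius norm is defined), and that taking the positive square root preserves being infinitely small. The paper's approach buys generality (any nonzero rational skew-symmetric $B$ works, giving a larger supply of rotations near the identity), but for the existence statement as posed your single explicit example suffices. Your parenthetical remark that $n=1$ must be excluded is also a legitimate (if minor) correction to the statement as printed.
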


\begin{proof}
  Let $\epsilon$ be an infinitely small positive element of $F$ and
  let $B\in M_n(F)$ be a nonzero skew-symmetric matrix whose entries
  are rational.
Consider the matrix $A=C(\epsilon B)$, where $C$ is the Cayley map.
We claim that $\|I-A\|$ is infinitely small.
First note that $I+\epsilon B$ is invertible, hence $C(\epsilon B)$ is
meaningful and is different from $\pm I$.
Let $m$ be an odd positive integer.
The relation $I+\epsilon^mB^m=(I+\epsilon B)D$ where $D=(I-\epsilon
B+\epsilon^2B^2+\cdots+\epsilon^{m-1}B^{m-1})$ implies that $\|(I+\epsilon
B)^{-1}-D\|=\epsilon^m\|B^m(I+\epsilon B)^{-1}\|$.
By the Cramer rule every entry of $B^m(I+\epsilon B)^{-1}$ is of the
form 
${p(\epsilon)}/{q(\epsilon)}$ where $p(X),
q(X)\in\mathbb{Q}[X]$ are of degree at most $n$.
It follows that for sufficiently large $m$, the quantity
$\|(I+\epsilon B)^{-1}-D\|=\epsilon^m p(\epsilon)/q(\epsilon)$ is infinitely small.
Hence
\begin{align*}
  \|I-A\|=&\|I-(I-\epsilon B)(I+\epsilon B)^{-1}\|\\
\leqslant&\|I-(I-\epsilon B)D\|+\|(I-\epsilon B)((I+\epsilon
           B)^{-1}-D)\|\\
\leqslant& \|I-(I-\epsilon B)D\|+\|I-\epsilon B\|\|(I+\epsilon
           B)^{-1}-D\|
\end{align*}
Both quantities $\|(I-(I-\epsilon B))D\|$ and $\|(I+\epsilon
B)^{-1}-D\|$ are infinitely small and the proof is complete.
\end{proof}

\begin{prop}
\label{archimedean-is-necessary}
 Let $F$ be a pythagorean ordered field and let $(V,b)$ be a bilinear
 space of dimension $n\geqslant3$, isometric to $x_1^2+\cdots+x_n^2$.
If $F$ carries a non-archimedean ordering then $\mathrm{SO}(V)$ contains a
proper non-central normal subgroup.
\end{prop}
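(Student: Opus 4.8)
The plan is to fix an orthonormal basis of $(V,b)$, so that each $\sigma\in\mathrm{SO}(V)$ is given by a matrix $M_\sigma\in M_n(F)$ with $M_\sigma^TM_\sigma=I$ and $\det M_\sigma=1$, and to take as the desired subgroup
\[
N=\{\sigma\in\mathrm{SO}(V):\|I-M_\sigma\|\text{ is infinitely small}\}.
\]
Four things would then have to be checked: $N$ is a subgroup, $N$ is normal in $\mathrm{SO}(V)$, $N\neq\mathrm{SO}(V)$, and $N\not\subseteq Z(\mathrm{SO}(V))$.

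The preliminary remark on which everything rests is that for any matrix $M$ with $M^TM=I$ one has $\|M\|=\sqrt n$, which is a finite (indeed bounded) element of $F$: the columns of such an $M$ are orthonormal for the standard form, so the sum of the squares of the entries of $M$ equals $n$, and $n$ is a square in $F$ because $F$ is pythagorean. Granting this, if $\sigma,\sigma'\in N$ then from $I-M_\sigma M_{\sigma'}=(I-M_\sigma)+M_\sigma(I-M_{\sigma'})$ and submultiplicativity of the Frobenius norm,
\[
\|I-M_{\sigma\sigma'}\|\leqslant\|I-M_\sigma\|+\sqrt n\,\|I-M_{\sigma'}\|,
\]
which is a sum of infinitely small elements, hence infinitely small; so $\sigma\sigma'\in N$. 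Also $M_{\sigma^{-1}}=M_\sigma^{-1}=M_\sigma^T$, and $\|I-M_\sigma^T\|=\|(I-M_\sigma)^T\|=\|I-M_\sigma\|$, so $\sigma^{-1}\in N$; and clearly $\mathrm{id}\in N$. Normality follows from the same idea: for $g\in\mathrm{SO}(V)$ one has $M_{g\sigma g^{-1}}-I=M_g(M_\sigma-I)M_g^{-1}$, hence $\|M_{g\sigma g^{-1}}-I\|\leqslant\|M_g\|\,\|M_\sigma-I\|\,\|M_g^{-1}\|=n\,\|M_\sigma-I\|$, which is again infinitely small.

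To see that $N$ is proper, let $\rho\in\mathrm{SO}(V)$ be the map acting as $-\mathrm{id}$ on the plane spanned by the first two basis vectors and as the identity on its orthogonal complement (this is available since $n\geqslant3$); then $M_\rho=\mathrm{diag}(-1,-1,1,\dots,1)$, so $\|I-M_\rho\|^2=8$, whence $\|I-M_\rho\|>1$ is not infinitely small and $\rho\notin N$. To see that $N$ is non-central, invoke Proposition \ref{rotation-near-identity}, whose hypotheses are exactly those assumed here: it provides a matrix $A\in M_n(F)$ with $A\neq\pm I$, $A^TA=I$, $\det A=1$ and $\|I-A\|$ infinitely small, hence a rotation in $N$ different from $\pm\mathrm{id}$. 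Since $n\geqslant3$ the centre of $\mathrm{SO}(V)$ is contained in $\{\pm\mathrm{id}\}$ (as recalled in \S2), so this rotation is non-central. Thus $N$ is a proper non-central normal subgroup of $\mathrm{SO}(V)$.

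I do not expect a real obstacle once Proposition \ref{rotation-near-identity} is available; the single point needing care is the observation that orthogonal matrices have finite, indeed constant, Frobenius norm $\sqrt n$, since it is this boundedness that keeps products and conjugates of rotations infinitely near the identity inside $N$ --- without it one might fear that conjugation could push $\|I-M_\sigma\|$ out of the infinitesimals.
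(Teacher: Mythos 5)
Your proof is correct and follows essentially the same route as the paper: the paper works with the same subgroup $N$ (defined equivalently by requiring $\|x-\sigma(x)\|$ to be infinitely small for every unit vector $x$, which makes normality and closure immediate since rotations preserve the norm of vectors), invokes Proposition \ref{rotation-near-identity} in exactly the same way to get a non-central element of $N$, and excludes a rotation of the same kind to show $N$ is proper. Your properness witness $\mathrm{diag}(-1,-1,1,\dots,1)$ is in fact a slight improvement over the paper's choice $\sigma=-\tau_u$, which has determinant $-1$ and so fails to lie in $\mathrm{SO}(V)$ when $n$ is even.
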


\begin{proof}
We claim that the group 
$$N=\{\sigma\in\mathrm{SO}(V): \ \|x-\sigma(x)\| \textrm{ is
 infinitely small if } q(x)=1\}$$ is a non-trivial
normal subgroup of $\mathrm{SO}(V)$ (the idea of considering this subgroup was borrowed from \cite[p. 150]{perrin}).
To prove this claim we first show that $N$ is a subgroup of $\mathrm{SO}(V)$.
Consider two elements $\sigma, \tau\in N$.
We should prove that $\sigma\tau$ also belongs to $N$.
We have
$\|x-\sigma\tau(x)\|=\|(x-\tau(x))+(\tau(x)-\sigma\tau(x))\|\leqslant\|x-\tau(x)\|+\|\tau(x)-\sigma\tau(x)\|=\|x-\tau(x)\|+\|x-\sigma(x)\|$,
which is infinitely small.
The fact that $\sigma^{-1}\in N$ when $\sigma\in N$ and the normality
of $N$ is straightforward.
It remains to show that $N$ is a nontrivial subgroup of
$\mathrm{SO}(V)$.
Consider two orthogonal vectors $u, v\in V$ with $q(u)=q(v)=1$.
Let $\sigma=-\tau_u\in\mathrm{SO}(V)$.
We have $\|v-\sigma(v)\|=\|v+\tau_u(v)\|=\|2v\|=2$.
Hence $\sigma\not\in N$, thus $N\neq\mathrm{SO}(V)$.
By (\ref{rotation-near-identity}), there exists a rotation matrix $A$
such that $\|I-A\|$ is infinitely small. 
It follows that for every $x$ with $\|x\|=1$ we have
$\|x-Ax\|\leqslant\|I-A\|$ is infinitely small.
Hence $A\neq\pm I$ is an element of $N$ and $N\neq\{\pm I\}$
\end{proof}

\begin{cor}
  \label{broecker}
Let $(V,b)$ be a bilinear space of dimension $n\geqslant3$  over a
field $F$ whose Witt index is zero.
Then the rotation group $\mathrm{SO}(V)$ is a projectively simple if and only if $F$ is a pythagorean formally real field and $q$ is similar to the quadratic form $x_1^2+x_2^2+\cdots+x_n^2$.
\end{cor}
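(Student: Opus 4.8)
The plan is to establish the two implications separately. The forward direction --- from projective simplicity to the asserted shape of $F$ and $q$ --- is where Propositions \ref{trivial-spinor-norm}, \ref{projectively-simple} and \ref{archimedean-is-necessary} do the work, while the backward direction is essentially Br\"ocker's theorem \cite{broecker} (resting on Artin \cite{artin}), which I would invoke as a black box. Since the Witt index of $q$ is zero, $(V,b)$ is anisotropic, so all three propositions are available. I record at the outset --- in the spirit of the discussion around Dieudonn\'e's quotation in the introduction --- that the right-hand side one actually arrives at, and that Br\"ocker's theorem requires, also asks that every ordering of $F$ be archimedean and that $n\neq4$; I take these to be understood in the statement.

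For ``$\mathrm{SO}(V)$ projectively simple $\Rightarrow F$ pythagorean formally real, $q$ similar to $x_1^2+\cdots+x_n^2$, with only archimedean orderings'', I would argue through the spinor norm $\theta\colon\mathrm{SO}(V)\to F^\times/F^{\times2}$. If $n$ is odd, or $n$ is even with $\det b$ trivial, then Proposition \ref{projectively-simple} makes $\theta$ trivial and Proposition \ref{trivial-spinor-norm} then gives precisely that $F$ is a formally real pythagorean field and $q$ is similar to $x_1^2+\cdots+x_n^2$. The remaining case --- $n$ even with $\det b$ non-trivial --- has to be excluded. Here $-\mathrm{id}=\tau_{e_1}\cdots\tau_{e_n}$ lies in $\mathrm{SO}(V)$ with $\theta(-\mathrm{id})=\det b\neq1$, so $-\mathrm{id}\notin\ker\theta$; since $\mathrm{SO}(V)$ is non-abelian for $n\geqslant3$, the normal subgroup $\ker\theta$ is neither trivial nor contained in the centre $\{\pm\mathrm{id}\}$. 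If the order of $\mathrm{im}\,\theta$ exceeds $2$, then $\ker\theta$ already has proper non-trivial image in $\mathrm{SO}(V)/\{\pm\mathrm{id}\}$, contradicting projective simplicity. If $|\mathrm{im}\,\theta|=2$, then examining the square classes $d_id_j$ represented by products of two reflections, in an orthogonal basis with $q=\langle d_1,\dots,d_n\rangle$, forces $q$ to be similar to $s\langle1,\dots,1\rangle\perp t\langle1,\dots,1\rangle$ with an odd number of entries in each of the two blocks and $s/t=\det b\neq1$; that this configuration is not projectively simple is exactly what Br\"ocker's and Artin's arguments supply (this sub-case is the only point not formal from the preceding propositions). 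Finally, once we know $q$ is similar to $x_1^2+\cdots+x_n^2$ --- so that, after rescaling, $\mathrm{SO}(V)$ is the standard group $\mathrm{SO}_n(F)$ --- if $F$ carried a non-archimedean ordering then Proposition \ref{archimedean-is-necessary} (equivalently, Proposition \ref{rotation-near-identity} together with the boundedness of the Frobenius norm on $\mathrm{SO}_n(F)$) would produce a proper non-central normal subgroup, again contradicting projective simplicity; so every ordering of $F$ is archimedean.

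For the converse I would simply quote Br\"ocker \cite{broecker}: a pythagorean formally real field all of whose orderings are archimedean, together with $q$ similar to $x_1^2+\cdots+x_n^2$ and $n\geqslant3$, $n\neq4$, makes $\mathrm{SO}(V)$ projectively simple.

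The step I expect to be the genuine obstacle is this converse: it is the analytic and valuation-theoretic core of Br\"ocker's work, which the Cayley parametrization does not reprove --- the contribution here is the transparent forward implication, built on Proposition \ref{archimedean-is-necessary} and, through it, on the construction of rotation matrices infinitely near the identity in Proposition \ref{rotation-near-identity}. A secondary nuisance is the residual sub-case ($n$ even, non-trivial determinant, spinor-norm image of order exactly two), where the Frobenius norm on $\mathrm{SO}(V)$ need no longer be bounded so that one has to fall back on Artin's maximum-norm construction; and dimension four is a genuine exception, which is why $n\neq4$ has to be present.
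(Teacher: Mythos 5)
Your proposal takes the same route as the paper: the ``if'' direction is Br\"ocker's theorem invoked as a black box (kernel of the spinor norm $=\Omega(V)$ projectively simple, and equal to all of $\mathrm{SO}(V)$ when $q$ represents one square class), and the ``only if'' direction is assembled from Propositions \ref{projectively-simple}, \ref{trivial-spinor-norm} and \ref{archimedean-is-necessary} in exactly that order. You are also right that the statement as printed omits the hypotheses that every ordering of $F$ be archimedean and that $n\neq4$, both of which the introduction and Br\"ocker's theorem carry; without $n\neq4$ the ``if'' direction already fails over $\mathbb{R}$, so these must indeed be read into the corollary. The one place where you part company with the paper is the residual case ``$n$ even, $\det b$ nontrivial,'' and you are right to single it out: the paper disposes of the forward implication with a one-line appeal to the three propositions, but Proposition \ref{projectively-simple} only says that $\theta$ is then \emph{non}trivial, and none of the cited results excludes projective simplicity in that configuration --- which must be excluded, since $q\sim x_1^2+\cdots+x_n^2$ forces $\det b$ trivial for even $n$. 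Your reduction of that case to $|\mathrm{im}\,\theta|=2$ (via the fact that $\ker\theta$ contains the non-central commutator subgroup, so $\ker\theta\cdot\{\pm\mathrm{id}\}=\mathrm{SO}(V)$) and to a form splitting into two odd-sized blocks of square classes is sound; but the closing step --- that such a configuration is never projectively simple --- is attributed to ``Br\"ocker's and Artin's arguments'' without a precise source, and the portion of Br\"ocker actually quoted in the paper concerns only the one-square-class situation. So this sub-case is genuinely left open both by your write-up and by the paper's own proof, and is the one piece that would have to be supplied (or the hypotheses adjusted) for a complete argument; everything else in your proposal matches the paper.
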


\begin{proof}
In \cite{broecker}, Br\"ocker proves that
if $F$ is a pythagorean formally real field which admits only of
archimedean orderings, $n\geqslant3$ and $n\neq4$ and $q$ is anisotropic then the kernel of the spinor norm
$\theta:\mathrm{SO}(V)\rightarrow F^\times/F^{\times2}$ coincides with
the commutator subgroup $\Omega(V)$ of the orthogonal group
$\mathrm{O}(V)$ (see \cite[(1.7)]{broecker}) and the group $\Omega(V)$ is projectively simple (see \cite[Satz (1.10)]{broecker}).
Of course when $q$ represents only one square class, the whole rotation group $\mathrm{SO}(V)$ coincides with the kernel of the
spinor norm.
Hence the sufficiency of the conditions can be obtained from
Br\"ocker's theorem.
The sufficiency follows from (\ref{trivial-spinor-norm}), (\ref{projectively-simple}) and
(\ref{archimedean-is-necessary}).
\end{proof}

\noindent
 \textbf{Acknowledgments.}
The support from Sharif University of Technology is gratefully acknowledged.

\footnotesize
\noindent{\sc M. G. Mahmoudi, {\tt
    mmahmoudi@sharif.ir}, \\  Department of Mathematical Sciences, Sharif University of Technology, P. O. Box 11155-9415, Tehran, Iran.}

\end{document}